\def\G{\Gamma}
\def\g{\gamma}
\def\gh{\hat{\gamma}}
\def\gt{\tilde{\gamma}}
\def\D{{\mathcal D}}
\def\H{{\mathcal H}}
\def\Hfty{\partial_{\infty}\H}
\DeclareMathOperator\diam{diam}
\DeclareMathOperator\dist{dist}
\def\N{{\mathbb N}}
\def\Z{{\mathbb Z}}
\def\eps{\varepsilon}
\def\ph{\varphi}
\theoremstyle{definition}
\newtheorem{dfn}{Definition}
\theoremstyle{plain}
\newtheorem{thm}{Theorem}
\newtheorem{lemma}{Lemma}
\newtheorem{cor}{Corollary}
\theoremstyle{remark}
\newtheorem{case}{Case}
\newtheorem*{rmk*}{Remark}
\title{Morse coding for a Fuchsian group of a finite covolume}
\author[Arseny Egorov]{Arseny Egorov,\\\textit{The Pennsylvania State University}}
\begin{document}

\begin{abstract}
We consider a Fuchsian group $\G$ and 
the factor surface $\H/\G$, which has constant curvature $-1$ and maybe a few singularities. If we lift the surface continuously to $\H$ (except for a subset of a lower dimention), we obtain a fundamental domain $\D$ of $\G$. This can be done in different ways, but we restrict the choice 
to 
Dirichlet domains, which always are convex polygonal subsets of $\H$. Given a generic geodesic on $\H$, one can produce a so called geometric Morse code (or the cutting sequence) of the geodesic with respect to $\D$. We prove that the set of Morse codes of all generic geodesics on $\H$ with respect to $\D$ forms a topological Markov chain, if and only if $\D$ is an ideal polygon.
\end{abstract}

\maketitle

\section{Introduction}

There have been several approaches to coding of geodesics on the hyperbolic plane with respect to fundamental domains of Fuchsian groups. One approach was introduced by Morse \cite{Morse}, in which the plane is tesselated according to the chosen domain and the action of the group, the sides of the tesselation are labeled in a certain way, and the geodesics are then coded using their intersections with the sides of the tesselation. Another approach was developed by Artin \cite{Artin} and for coding it uses the endpoints of geodesics on the absolute, which are identified with real numbers and are expanded into continued fractions.

Artin's approach was broadened to different types of continued fractions, e.g. \cite{Katok2}, \cite{MayerStromberg}, and different Fuchsian groups (the original one was used with the modular group), \cite{BowenSeries}, \cite{Series1, Series2}, but it requires some reduction process (i.e. finding an appropriate geodesic among $\G$-equivalent ones) before actual encoding. Moreover, the coding process may be defined differently for the same tessellation of the plane, which corresponds to different types of continued fractions. Being more synthetic than Morse's, this approach produces a set of sequences of integers, which is a topological Markov chain and is much easier to structurize than just a ``random'' set of sequences.

Morse's approach utilizes only the way the hyperbolic plane is tiled by fundamental domains of the group, which can be reproduced ``internally'' on the factor surface $\H/\G$ by marking curves on the surface along which it needs to be ``cut'' to obtain one of the fundamental domains on the plane. And Morse's approach is defined uniquely once a tessellation is chosen. 
So it is somewhat more natural and canonic than Artin's. We became interested, under what conditions Morse's coding produces a set of sequences, which is a topological Markov chain too. In \cite{KatokUgarcovici} the case of the modular group $PSL(2, \Z)$ is considered along with a slightly modified version of the coding, which uses integer numbers instead of the group's elements. Their result implies the results of this paper for the case of the modular group and its standard fundamental domain, and is the primary motivation for this work.

\subsection{Fuchsian groups and Dirichlet domains}

The following definitions provide us with basic terms:

\begin{dfn}
A group of orientation preserving isometries of the hyperbolic plane $\H$, acting discretely, is called a Fuchsian group.
\end{dfn}

Given a Fuchsian group $\G$, one can consider the factor space $\H/\G$. Since $\G$ acts discretely on $\H$, the space is an orbifold.

\begin{dfn}
If the volume of the factor surface $\H/\G$ is finite, the group is said to have a finite covolume.
\end{dfn}

\begin{dfn}
Let $\G$ be a Fuchsian group, and let $x \in \H$ be a point which is not fixed by any element of $\G$ except the identity. Then the Dirichlet domain of $\G$ with respect to $x$ is the set $\D_x(\G) = \{y \in \H: \dist(y, x) \leq \dist (y, gx)\ \forall g \in \G\}$.
\end{dfn}

A Dirichlet domain of a Fuchsian group is a fundamental domain of the group, \cite[\S 3.1]{Katok1}, and is a convex polygonal set on the union of the hyperbolic plane and its boundary at infinity. Siegel \cite[Theorem 4.1.1]{Katok1} tells us, that if the group has a finite covolume, the number of edges of a Dirichlet domain is finite. Clearly, a Dirichlet domain of a group with a finite covolume has no edges on $\Hfty$. In terms of the factor surface $\H/\G$, it may have a finite number of singular points and cusps, but cannot have funnels.

\subsection*{Some agreements}

\begin{itemize}
\item Given a Fuchsian group $\G$ and its Dirichlet domain $\D$, we will refer to the edges and vertices of the tessellation $\G\D$ as just ``edges'' and ``vertices'', because there are no edges or vertices considered in this paper other than those.

\item We will call the vertices on $\Hfty$ ``infinite'' and all the others --- ``finite''.

\item The geodesics are always directed. Then referring to the left and right hand side of the plane with respect to a given geodesic makes sense. If one geodesic crosses another one, we will say it is on the right from the other one, if its direction is from the left to the right, otherwise we will say it is on the left from the other one. Clearly, if one geodesic is on the right from another one, then the latter one is on the left from the former one and vice versa.

%
\end{itemize}

\subsection{Morse coding}

Consider a Fuchsian group $\G$ along with a Dirichlet domain $\D$. $\D$ shares its edges with some of its images (``copies'') under the action of $\G$. Let the images, sharing edges with $\D$, be $g_1\D$, $\dots$, $g_n\D$, where $g_1$, $\dots$, $g_n$ $\in \G$. Let us label the edges of $\D$ by the elements $g_1$, $\dots$, $g_n$, so that the edge shared with $g_i\D$ is labeled by $g_i$. This labeling is made inside the domain. Now we spread this labeling to the rest of the tessellation: for every $g \in \G$ the edge of $g\D$, corresponding to the one labeled by $g_i$ inside $\D$, is labeled by $g_i$ inside $g\D$.

\begin{figure}[ht]
	\includegraphics[width=\textwidth]{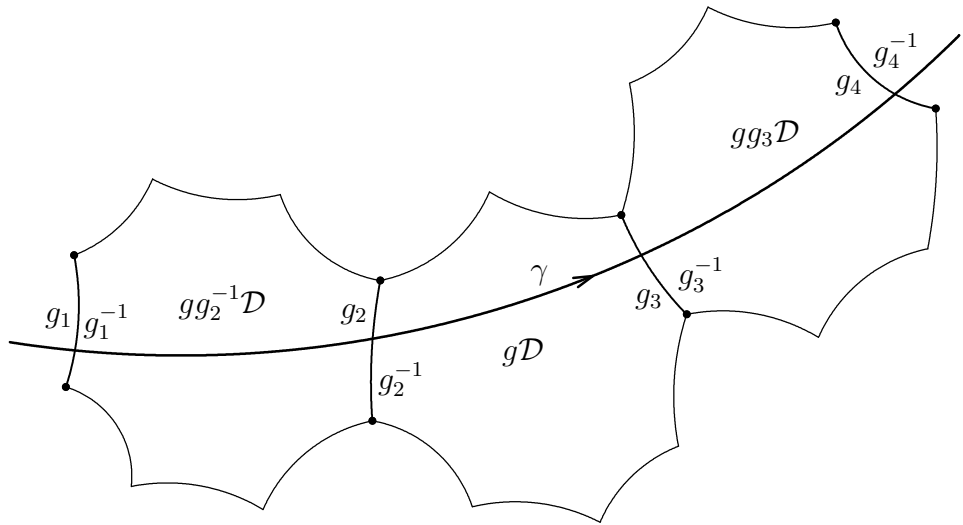}
	\setlength{\unitlength}{\textwidth}
	\begin{picture}(0,0)
		\put(-.46,.25){$g_1$}

		\put(-.42,.24){$g_1^{-1}$}
		\put(-.32,.26){$gg_2^{-1}\D$}
		\put(-.15,.255){$g_2$}

		\put(-.11,.18){$g_2^{-1}$}
		\put(.05,.295){$\g$}
		\put(.02,.21){$g\D$}
		\put(.16,.27){$g_3$}

		\put(.205,.29){$g_3^{-1}$}
		\put(.26,.44){$gg_3\D$}
		\put(.37,.495){$g_4$}

		\put(.41,.525){$g_4^{-1}$}
	\end{picture}
	\caption{A geodesic crossing multiple fundamental domains.}
\end{figure}

When the labeling is done, each edge on the plane is labeled on both sides. It is easy to show, that each edge is labeled by elements inverse to each other from opposite sides.

A generic geodesic on the hyperbolic plane avoids the vertices, finite or infinite. Given a generic geodesic, one can write down the biinfinite sequence of labels on the edges, which the geodesic crosses on its way (each time the geodesic crosses an edge, one writes down the label on the edge from the domain the geodesic is leaving). This sequence is called the geodesic's cutting sequence. It was used by Morse in his paper \cite{Morse}, and is also referred to as (geometric) Morse code of the geodesic. According to \cite{Series2}, it can be traced back to \cite{Koebe}.

It is easy to see from the geometric construction, that in the cutting sequence of a geodesic, an entry cannot be immediately followed by its inverse from $\G$. This fact follows from two observations: that every label is unique inside one domain, and that a geodesic cannot cross the same edge twice in a row.

\subsection{Main result}

We consider the set of cutting sequences of all generic geodesics on $\H$ with respect to a given Dirichlet domain $\D$.

\begin{dfn}
Let $\Sigma$ be a no more than countable alphabet. Let $\Lambda \subseteq \Sigma^\Z$ be a set of biinfinite sequences of elements of $\Sigma$. Assume there are $k \in \N$ and $\Lambda_k \subseteq \Sigma^{k+1}$, such that $\Lambda = \{\lambda \in \Sigma^\Z: (\lambda_n, \dots, \lambda_{n+k}) \in \Lambda_k, \forall\ n \in \Z\}$. Then $\Lambda$ is called a $k$-step topological Markov chain.

Let us say that $\Lambda$ is a topological Markov chain, if it is a $k$-step topological Markov chain for some $k \in \N$.

The elements of $\Lambda_k$ are called ``allowed'' $(k+1)$-tuples.
\end{dfn}

\begin{thm}
\label{thm:main}
Let $\G$ be a Fuchsian group with a finite covolume and a Dirichlet domain $\D$. Then the set of Morse codes of generic geodesics on $\H$ with respect to $\D$ is a topological Markov chain, if and only if $\D$ does not have finite vertices.
\end{thm}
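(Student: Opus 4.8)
\emph{Approach and reduction.} The plan is to move back and forth between geodesics and the combinatorics of the tessellation $\G\D$. By equivariance of the labelling, a finite word $w=(h_1,\dots,h_m)$ in the edge labels corresponds to the chain of domains $\D=D_0,D_1,\dots,D_m$ with $D_j=(h_1\cdots h_j)\D$, consecutive ones sharing an edge $e_j$, and $w$ occurs as a subword of some generic geodesic's cutting sequence exactly when there is a geodesic meeting $e_1,\dots,e_m$ in order, with the sub-arc between $e_j$ and $e_{j+1}$ lying in $D_j$; call such $w$ \emph{realisable}. Since a geodesic never recrosses an edge, ``$h$ followed by $h^{-1}$'' is already forbidden, a window-$2$ rule. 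The theorem then follows from two statements: (I) realisability of finite words is cut out by forbidden patterns of some fixed bounded length if and only if $\D$ has no finite vertices; and (II) in that case the resulting subshift coincides with the set of cutting sequences. Recall that by Siegel's theorem the alphabet is finite here.

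\emph{The ideal case, local criterion.} Assume $\D$ has no finite vertices. Then every edge of the tessellation is a complete geodesic with both endpoints on $\Hfty$, and the vertices are ideal points at which domains accumulate inside horoballs. For a chain $D_0,\dots,D_m$, each edge $e_j$ cuts $\Hfty$ into two arcs; the existence of a transversal geodesic reduces to a finite-intersection condition on the forward and on the backward ``target arcs'' (a Helly-type argument, controlled once a few consecutive arcs are known), together with the requirement that the candidate geodesic meet the $D_j$ \emph{in order}. The order condition can fail only where the chain folds back across a vertex; across an ideal vertex such a fold is necessarily a run of one side-pairing label, and the key point — which I expect to be the most delicate — is to verify that this phenomenon is still detected within a bounded window, so that realisability becomes a bounded-window property. (If $\D$ has a finite vertex this breaks down, which is the content of the other direction.)

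\emph{From finite to biinfinite.} For (II) I would take a biinfinite sequence all of whose windows are realisable, realise each finite central truncation by a geodesic arc, normalise by an element of $\G$ so that all these arcs cross one fixed edge $e_0$, and pass to a subsequential limit. The window conditions force the two ends of the infinite chain to escape to \emph{distinct} ideal points, so the limit is a genuine biinfinite generic geodesic with the prescribed cutting sequence; the converse inclusion is immediate. Controlling the crossing points of the normalised arcs on the non-compact edge $e_0$, so that a limit exists and does not degenerate, is the technical core of this step.

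\emph{Necessity: a finite vertex destroys the Markov property.} Conversely, suppose $\D$ has a finite vertex $v$, with finitely many domains $C_0,\dots,C_{N-1}$ around it; a geodesic grazing a copy of $v$ crosses a fan of consecutive edges at that copy, spanning a sector of angle less than $\pi$ at the vertex, hence of bounded length. Given $k$, I would produce a chain passing in turn near a long sequence of copies $v,gv,g^2v,\dots$ of $v$ along which the chain deflects only slightly at each step, arranged so that every $k+1$ consecutive labels are realisable but the whole word is not: a single geodesic could graze all of $v,\dots,g^{\ell}v$ only if these lay on one geodesic, whereas for $\ell$ large they lie along a strictly convex curve, from which any fixed geodesic diverges. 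Extending such a word to a biinfinite sequence all of whose windows are realisable yields a sequence lying in every candidate $k$-step Markov chain yet realised by no geodesic; since $k$ was arbitrary, the cutting-sequence set is not a topological Markov chain. The main obstacle in both directions is thus the fine geometry near the vertices — the runs produced at ideal vertices for sufficiency, and the estimates on the slightly-deflecting chains near a finite vertex for necessity.
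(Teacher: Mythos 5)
Your outline of the sufficiency half follows the same route as the paper: the only length-two restriction is $\lambda_{n+1}\neq\lambda_n^{-1}$, every admissible word determines a chain of copies of $\D$ glued along complete geodesic edges, and a biinfinite admissible sequence should be realized by the geodesic joining the two limit points of the nested boundary arcs cut off by those edges. But the step you yourself single out as ``the technical core'' --- that the two ends of the chain escape to \emph{distinct} ideal points --- is a genuine gap, and your proposed fix (showing that a fold across an ideal vertex is ``detected within a bounded window'') cannot work. If the two sides of $\D$ meeting at a cusp are paired with each other by a parabolic $g$ generating the stabilizer of that cusp --- for example $\G=\Gamma(2)$ with the Dirichlet domain centered at $i$, the ideal quadrilateral with vertices $-1,0,1,\infty$, whose sides $\mathrm{Re}\,z=\pm1$ are paired by $z\mapsto z+2$ --- then the chain associated to the constant sequence $(\dots,g,g,g,\dots)$ consists of the domains $g^n\D$, whose separating edges accumulate in \emph{both} directions on the single fixed point of $g$, so no geodesic crosses them all; yet every finite block $(g,\dots,g)$ does occur in the cutting sequence of a generic geodesic (a high semicircle over the cusp), so no finite-window rule can exclude the constant sequence while admitting the realizable ones. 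The paper disposes of exactly this point with the sentence ``Obviously, $\xi_\infty$ and $\xi_{-\infty}$ are two different points,'' which fails in this situation; you have correctly located the weak point, but you should not expect to close it by a bounded-window argument --- some additional hypothesis (or a reformulation) excluding a cusp whose two adjacent sides are paired by a parabolic is needed.

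For the necessity half you propose a mechanism genuinely different from the paper's, and as written it has two concrete gaps. First, your non-realizability criterion --- that $v,gv,\dots,g^{\ell}v$ ``lie along a strictly convex curve, from which any fixed geodesic diverges'' --- fails for hyperbolic $g$: the orbit $\{g^n v\}$ stays within the fixed distance $\dist(v,\mathrm{axis}(g))$ of the axis of $g$ for all $n$, so nothing diverges; the mechanism works only for parabolic $g$, which need not exist (the cocompact case). Second, nothing in the labels alone forces a geodesic to pass \emph{near} the vertex copies, and the ``deflection per step'' cannot be tuned small relative to $k$, since it is fixed by the group; so the claim that every $(k+1)$-window of your long word is realizable while the whole word is not is unsupported. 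The paper obtains both kinds of control differently: it fixes a single finite vertex $V$, rotates a geodesic $\g_0$ through $V$ until the pattern of the first $k$ crossed edges changes (the critical geodesic $\g_{\ph'}$), and produces the two witnessing geodesics $\g^b$ and $\g^a$ by small perturbations on either side, relying on three preparatory lemmas (density of directions from a point to finite vertices, the fact that passing within $\eps$ of a finite vertex forces a crossing of an edge at that vertex, and the extendability of any crossed edge-chain by a further edge having a finite vertex on a prescribed side). The long word is then non-realizable because its first and last edges lie on the same side of $\g_{\ph'}$ while an intermediate edge lies on the other side. To complete your write-up you would need to replace the convex-curve heuristic by an argument of this single-vertex rotation type.
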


\begin{rmk*}
The proof of sufficiency is essentially contained in \cite[\S1, pp.\ 603--604]{Series2}: one needs to consider the tree, which connects the elements of the $\G$-orbit of the point, defining $\D$, if and only if they are contained in copies of $\D$, sharing a side. Every infinite sequence consisting of $g_1$, $\dots$, $g_n$ will then correspond to a branch of the tree, starting at the root, and a point on $\Hfty$. Every biinfinite sequence will correspond to two points on $\Hfty$ and, thus, to a geodesic. One only needs to check this geodesic's cutting sequence coincides with the original biinfinite sequence.

Although this scheme produces a good proof, we present our own proof of sufficiency, which seems to be slightly shorter and doesn't need too many details to be checked.
\end{rmk*}

\section{Auxiliary statements}

Throughout this section $\G$ is a Fuchsian group of a finite covolume. $\D$ is a Dirichlet domain for $\G$.

\begin{lemma}[There are plenty of finite vertices on the plane]
\label{lemma:bdry}
Assume $\D$ has a finite vertex. Let $y \in \H$ be some point. Consider the set $\Xi$ of directions leading from $y$ to all the finite vertices on the plane. Then the set is dense in the set of all directions at $y$ (or, equivalently, between any two directions at $y$ there is a direction from $\Xi$).
\end{lemma}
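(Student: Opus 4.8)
The key geometric fact I would exploit is that finite vertices of the tessellation $\G\D$ are precisely the points of a single $\G$-orbit (the orbit of some vertex $v$ of $\D$), and that this orbit accumulates at every point of $\Hfty$. Indeed, since $\G$ acts with finite covolume and $\D$ has a finite vertex, the orbit $\G v$ is infinite and its closure in $\H \cup \Hfty$ contains all of $\Hfty$ — this is a standard consequence of the fact that the limit set of a finite-covolume (hence non-elementary, lattice) Fuchsian group is all of $\Hfty$, and $\Hfty \subseteq \overline{\G v}$. So the finite vertices form a set whose closure contains the entire circle at infinity.

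From here the argument is elementary. Fix the basepoint $y \in \H$. The map sending a point $z \in \H \cup \Hfty \setminus \{y\}$ to the direction at $y$ of the geodesic ray $[y,z)$ is continuous and surjective onto the space of directions at $y$ (a circle), and it is a homeomorphism when restricted to $\Hfty$. I would take an arbitrary nonempty open arc $I$ of directions at $y$; it corresponds under this homeomorphism to a nonempty open arc $J \subseteq \Hfty$. Since $\overline{\G v} \supseteq \Hfty$, the orbit $\G v$ meets $J$... but one must be slightly careful: the accumulation is \emph{at} points of $\Hfty$, so what one really gets is that there are finite vertices $gv$ arbitrarily close to points of $J$, hence (by continuity of the direction map and openness of $I$) finite vertices whose direction from $y$ lies in $I$. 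That gives a vertex in $\Xi \cap I$, proving density.

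The main obstacle — really the only nontrivial input — is establishing that the $\G$-orbit of a finite vertex accumulates on all of $\Hfty$. I would handle this by first noting that a finite-covolume Fuchsian group is a lattice, hence non-elementary, so its limit set $L(\G)$ is all of $\Hfty$; and $L(\G)$ is by definition contained in the closure of any orbit $\G z$ for $z \in \H$. Applying this with $z = v$ gives $\Hfty = L(\G) \subseteq \overline{\G v}$, which is exactly what is needed. Everything else — the continuity and surjectivity of the "direction from $y$" map, and the passage from "close on $\Hfty$" to "same direction interval" — is routine point-set topology of the hyperbolic plane and its compactification, and I would not belabor it. One small alternative, if one wishes to avoid invoking the limit-set terminology: since $\D$ is a finite-sided polygon with at least one finite vertex and finite area, some side-pairing generator is hyperbolic or the polygon has a genuine interior angle, and in either case a direct ping-pong / reflection argument shows the vertex orbit is dense at infinity; but quoting $L(\G) = \Hfty$ is cleaner.
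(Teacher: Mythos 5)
Your proposal is correct, but it follows a genuinely different route from the paper. The paper splits into two cases: when $\D$ is bounded it uses the finiteness of $\diam(\D)$ to place an entire copy of $\D$ (hence a finite vertex) inside any angle at $y$; when $\D$ has an infinite vertex it shows the $\G$-images of that infinite vertex are dense on $\Hfty$ (via the Euclidean diameters of the tiles tending to $0$) and then uses the parabolic orbit of an adjacent finite vertex to accumulate directions at each such cusp. You instead invoke the single higher-level fact that a finite-covolume Fuchsian group is a lattice, hence of the first kind, so its limit set is all of $\Hfty$ and is contained in the closure of the orbit $\G v$ of a finite vertex $v$ of $\D$; density of directions then follows from the continuity of the visual ("direction from $y$") map on the compactified plane. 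Your version is shorter, avoids the case analysis entirely, and proves the slightly stronger statement that finite vertices accumulate at every boundary point; the paper's version is more elementary and self-contained, relying only on the two cited facts from Katok's book rather than on limit-set theory. One small inaccuracy in your write-up: the finite vertices of the tessellation form a finite \emph{union} of $\G$-orbits (one per $\G$-equivalence class of finite vertices of $\D$), not necessarily a single orbit; this is harmless, since your argument only needs one such orbit to accumulate on all of $\Hfty$. Your handling of the passage from accumulation on $\Hfty$ to density of directions at $y$ is appropriately careful and correct.
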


\begin{proof}
There are two cases: $\D$ has no infinite vertices and otherwise.

\begin{case}Suppose $\D$ has no infinite vertices, that is $\D$ is bounded. Consider $\eta_1$ and $\eta_2$, two arbitrary directions at $y$. Let us prove there is a direction between $\eta_1$ and $\eta_2$ leading from $y$ to a finite vertex.

Let $\g_1$ and $\g_2$ be the two geodesics, passing through $y$ and following $\eta_1$ and $\eta_2$, respectively. Since the geodesics are not parallel, there should be a point $z$ inside the angle formed by the two geodesics, such that $\dist(z, \g_1\cup\g_2) > \diam(\D)$. Since $z$ is contained in a copy of $\D$, that copy is entirely contained inside the angle as well. Since $\D$ has a finite vertex, so does the copy, containing $z$. Let $V$ be a finite vertex of the copy. Clearly, $V$ is inside the angle between $\g_1$ and $\g_2$, too. Thus, the direction, leading from $y$ to $V$, is between $\eta_1$ and $\eta_2$.
\end{case}

\begin{case}Assume $\D$ has an infinite vertex. Since $\D$ has a finite vertex too, $\D$ should have a side with both a finite and an infinite vertex. Let us denote the infinite vertex of the side by $\xi'$ and the finite vertex by $X'$. By \cite[Theorem 4.2.5]{Katok1}, every infinite vertex is the fixed point of some parabolic element of $\G$, therefore so is $\xi'$.

$\G$ has a finite covolume, so by \cite[Lemma 4.5.3]{Katok1}, in the unit circle Poincare model the Euclidean diameters of sets $g_n\D$, $g_n \in \G$, go to $0$, as $n \to \infty$. So if one considers any point $\eta$ on $\Hfty$ and a circle $L$ of small radius $\eps > 0$ around it, there will always be an element $g_\eta(\eps) \in \G$, such that $g_\eta(\eps)\D$ is inside $L$, and thus its infinite vertex, corresponding to $\xi'$ is inside $L$, too. Since $\eta$ and $\eps$ are arbitrary, we showed, that infinite vertices corresponding to $\xi'$ in all copies of $\D$ are dense on $\Hfty$.

Consider a parabolic element $g' \in \G$, fixing $\xi'$. The points $g'^{m}X'$, $m \in \Z$, lie on an horocycle, passing through $\xi'$, and form a regular polygon with infinite number of sides. Each of these points is a finite vertex. As $m \to \pm\infty$, the geodesics, going from $y$ to $g'^mX'$, converge to the geodesic, going from $y$ to $\xi'$. So the direction from $y$ to $\xi'$ is an accumulation point of $\Xi$.

A similar argument can be used for any infinite vertex of the form $g\xi'$, $g \in \G$. Thus $\Xi$ has accumulation points at every direction from $y$ to $g\xi'$. As mentioned above, the set $\{g\xi': g\in\G\}$ is dense in $\Hfty$. But it is the same as saying that the directions from $y$ to $g\xi'$, are dense in the set of all directions at $y$. Hence $\Xi$ is dense in the set of all directions as well.
\qed 
\end{case}\renewcommand{\qed}{}
\end{proof}

\begin{lemma}
\label{lemma:prx}
If a geodesic passes near enough to a finite vertex, it intersects an edge, ending at the vertex. (There exists $\eps > 0$, depending only on $\D$, such that if a geodesic $\g$ passes at a distance less than $\eps$ from a finite vertex $X$, then $\g$ intersects one of the edges, ending at $X$.)
\end{lemma}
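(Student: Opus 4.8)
The plan is to exploit the fact that a finite vertex $X$ of the tessellation is surrounded by finitely many copies of $\D$ meeting at $X$, and that these copies tile a neighborhood of $X$. First I would observe that since $\G$ has finite covolume, $\D$ has finitely many edges, hence finitely many vertices, and the $\G$-orbit of the vertices of $\D$ is discrete; in particular, around each finite vertex $X$ only finitely many copies $g_1\D,\dots,g_m\D$ of $\D$ meet at $X$, and their union is a closed polygonal neighborhood of $X$ bounded by the edges emanating from $X$ together with the "outer" edges of those copies not containing $X$. Crucially, since $X$ is a vertex of each such copy, $X$ does not lie on any of those outer edges, so there is a positive distance from $X$ to the union of all outer edges.

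Next I would pick, for each finite vertex $X$, the radius $\eps_X$ to be the distance from $X$ to the union of those outer edges (the edges of the copies $g_i\D$ meeting at $X$ that do not pass through $X$), and then set $\eps$ to be the minimum of $\eps_X$ over the finitely many vertices of $\D$ — noting that by $\G$-invariance of the tessellation this same $\eps$ works at every finite vertex of the plane, since every finite vertex is a $\G$-translate of a vertex of $\D$. With this choice, if a geodesic $\g$ passes within distance $\eps$ of $X$, it enters the open metric ball $B(X,\eps)$, which is contained in the union of the copies meeting at $X$. Since a bi-infinite geodesic is unbounded, it must leave this bounded neighborhood of $X$ on both ends; hence it must cross the boundary of the union of the copies meeting at $X$. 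But the portion of that boundary within distance $\eps$ of $X$ consists precisely of (sub-arcs of) the edges ending at $X$ — the outer edges are at distance at least $\eps$ from $X$ and so cannot be crossed inside $B(X,\eps)$. Therefore $\g$ crosses one of the edges ending at $X$.

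I expect the main obstacle to be making rigorous the claim that the copies of $\D$ meeting at $X$ form a full neighborhood of $X$ with no gaps, and that their only boundary edges near $X$ are the ones through $X$. This is geometrically evident — it is the standard "link of a vertex in a tessellation" picture — but a careful argument uses convexity of $\D$ (so the angle of $\D$ at $X$ is a well-defined interior angle), discreteness of $\G$ (finitely many copies around $X$, angles summing to $2\pi$ possibly after passing to the orbifold, i.e.\ $X$ may be a cone point), and the fact that the tessellation $\G\D$ has no accumulation of edges near the finite point $X$. One should also be slightly careful that a geodesic passing within $\eps$ of $X$ might be tangent-like and only graze $B(X,\eps)$; but since $\eps$ can be taken strictly less than $\eps_X$, or since we may simply note the geodesic meets the \emph{open} ball and is nonconstant, it genuinely enters and exits, which is all that is needed.
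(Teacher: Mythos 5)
There is a genuine gap at the final step. Let $U$ denote the union of the copies of $\D$ meeting at $X$. Each edge ending at $X$ is shared by two of these copies, so (apart from its far endpoint) it lies in the \emph{interior} of $U$; the topological boundary of $U$ consists only of the outer edges. Hence $\partial U$ does not meet $B(X,\eps_X)$ at all, and your assertion that ``the portion of that boundary within distance $\eps$ of $X$ consists precisely of sub-arcs of the edges ending at $X$'' is false. What your unboundedness argument actually yields is that $\g$ eventually crosses an outer edge, somewhere outside $B(X,\eps_X)$ --- which says nothing about the edges ending at $X$. The stronger claim you would need, that $\g$ must cross one of the edges through $X$ \emph{inside} the ball, fails for your choice of $\eps$: a chord of $B(X,\eps_X)$ whose distance $d$ to $X$ is close to $\eps_X$ subtends a small angle as seen from $X$, so if some angle of $\D$ is large (close to $\pi$), the chord can enter and leave the ball while staying in a single sector, i.e.\ in a single copy of $\D$, meeting no edge through $X$, and then exit $U$ through an outer edge. (A secondary issue: if $\D$ has infinite vertices, $U$ need not be bounded, so even ``the geodesic must leave $U$'' requires justification.)

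The missing ingredient is precisely the quantitative estimate in the paper's proof: $\eps$ is chosen not as the distance to the outer edges but from the largest angle $\ph$ of $\D$ and the shortest edge length $a$. The foot $y$ of the perpendicular from $X$ to $\g$ determines a direction at $X$ making angle at most $\ph/2$ with some edge ending at $X$ (the edge directions at $X$ cut the circle of directions into sectors of angle at most $\ph$), and taking $\eps$ to be the leg at the angle $\ph/2$ of a right triangle with hypotenuse $a$ forces that edge to meet $\g$ at distance less than $a$ from $X$, hence within the edge itself. Your reduction to a single uniform $\eps$ via $\G$-invariance and the finiteness of the vertex set of $\D$ is fine and matches the paper; it is the core geometric step that needs this angle-versus-distance argument rather than the ``must exit a neighborhood'' argument.
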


\begin{proof}
Since $\D$ has a finite volume, it has a finite number of edges. Let $\ph$ be the greatest angle of $\D$, and let $a$ be the length of the shortest edge of $\D$. Choose $\eps$ to be such, that a right triangle with the hypothenuse of length $a$ and an acute angle $\frac{\ph}{2}$ has the leg at that angle equal to $\eps$.

Suppose a geodesic $\g$ passes closer than $\eps$ to a finite vertex $X$, consider the perpendicular from $X$ to $\g$, let the base of it be point $y$. The minimal angle between $[X, y]$ and an edge ending at $X$ is at most $\frac{\ph}{2}$, the length of $[X, y]$ is less than $\eps$, and the length of the edge forming the minimal angle with $[X, y]$ is at least $a$, so the edge has to cross $\g$.
\end{proof}

\begin{lemma}
\label{lemma:n+1}
Suppose $\D$ has at least one finite vertex. Assume a geodesic $\g$ passes through a point $y$ and then crosses edges $\langle A_1, B_1\rangle$, $\dots$, $\langle A_n, B_n\rangle$. Then there are geodesics, $\g^l$ and $\g^r$, which pass through $y$, cross the same edges and at least one more after that, such that the new edge for $\g^l$ has a finite vertex on the right from $\g^l$, and the new edge for $\g^r$ has a finite vertex on the left from $\g^r$.
\end{lemma}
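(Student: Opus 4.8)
The plan is to produce $\g^l$ and $\g^r$ by perturbing the direction of $\g$ at $y$, steering the perturbed geodesic so that it runs close past a finite vertex: Lemma~\ref{lemma:bdry} supplies enough finite vertices in enough directions to do the steering, and Lemma~\ref{lemma:prx} then guarantees that running close past a finite vertex forces a crossing of an edge having that vertex as an endpoint, which endpoint can be put on whichever side of the geodesic we please. For $\eta$ a direction at $y$, write $\g_\eta$ for the geodesic through $y$ in direction $\eta$, so $\g=\g_{\eta_0}$ for some $\eta_0$. Since $\g$ crosses the edges $\langle A_1,B_1\rangle,\dots,\langle A_n,B_n\rangle$ transversally at interior points, I would fix a small open interval $J$ of directions around $\eta_0$ so that for every $\eta\in J$ the geodesic $\g_\eta$ still crosses these same $n$ edges in the same order, and moreover its crossing point with $\langle A_n,B_n\rangle$ stays within some fixed distance $T$ of $y$ (both by stability of transversal crossings and by continuous dependence of the crossing point on $\eta$).

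Next I want a direction in $J$ that points from $y$ to a finite vertex lying far out --- at distance greater than $T+c$ from $y$, where $c=c(\D)$ is a bound (from the right-triangle estimate already used in the proof of Lemma~\ref{lemma:prx}) on how far the crossing point guaranteed by that lemma can be from the vertex. Because $\G$ acts properly discontinuously, only finitely many copies of $\overline{\D}$, hence only finitely many finite vertices, meet the ball of radius $T+c$ about $y$; if every direction in $J$ pointing to a finite vertex pointed to one of these finitely many, then $J$ --- a nonempty open set --- would contain only finitely many directions of the form ``from $y$ to a finite vertex'', contradicting the density asserted by Lemma~\ref{lemma:bdry}. So there is $\eta_*\in J$ and a finite vertex $W$ with $\dist(y,W)>T+c$ and $W$ lying on $\g_{\eta_*}$; being in the direction $\eta_*$ and beyond distance $T$, this $W$ sits on $\g_{\eta_*}$ past its crossing with $\langle A_n,B_n\rangle$. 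Now rotate the direction at $y$ by a small amount $\pm\delta$ to either side of $\eta_*$, staying inside $J$: the geodesics $\g_{\eta_*\pm\delta}$ still cross all $n$ edges, and by continuity they pass within distance $\eps$ of $W$ (the $\eps$ of Lemma~\ref{lemma:prx}) at points beyond the $n$-th crossing, with $W$ on opposite sides of them. By Lemma~\ref{lemma:prx} each of $\g_{\eta_*\pm\delta}$ crosses an edge ending at $W$; by the choice of $c$ this crossing happens after $\langle A_n,B_n\rangle$, and the edge has the finite vertex $W$ on the prescribed side. Taking $\g^l$ to be whichever of $\g_{\eta_*\pm\delta}$ has $W$ on the right, and $\g^r$ the other one, completes the construction.

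I expect the second paragraph to be the crux. Lemma~\ref{lemma:bdry} only controls \emph{directions} from $y$ to finite vertices and says nothing about the distance along $\g_\eta$ at which such a vertex appears, whereas the conclusion demands that the extra crossing occur strictly \emph{after} the $n$-th edge. The way around this is to first trap the $n$-th crossing point inside a bounded region --- which is exactly why $J$ must be taken small, since an edge of $\D$ may itself run out to an infinite vertex and the crossing point could otherwise drift off to infinity as $\eta$ varies --- and then to discard the finitely many ``too close'' finite vertices, which by local finiteness of the tessellation costs only finitely many directions and hence does not spoil density. The only other point to verify is the quantitative remark that a geodesic passing within $\eps$ of $W$ meets the relevant edge at bounded distance from $W$; this is the same hyperbolic right-triangle inequality that underlies Lemma~\ref{lemma:prx}, so no new idea is needed.
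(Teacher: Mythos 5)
Your proposal is correct and follows essentially the same route as the paper's proof: rotate the geodesic about $y$, use Lemma~\ref{lemma:bdry} plus a discreteness/counting argument to find a finite vertex in a nearby direction but beyond the $n$-th edge (the paper excludes the finitely many vertices in the compact triangle cut off by $\langle A_n,B_n\rangle$, you exclude the finitely many within radius $T+c$ of $y$ --- the same idea in metric rather than combinatorial form), and then steer within $\eps$ of that vertex on the prescribed side and invoke Lemma~\ref{lemma:prx}. The only cosmetic difference is that you obtain $\g^l$ and $\g^r$ by perturbing to either side of a single vertex $W$, while the paper treats the two sides by separate symmetric constructions.
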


\begin{proof}
It is enough to prove the statement for $\g^r$. The proof for $\g^l$ is absolutely the same.

It is clear that we can rotate $\g$ around $y$ clockwise so that it still intersects all of $\langle A_i, B_i\rangle$. consider such a rotation of $\g$ and call the new geodsic $\gh^r$.

According to Lemma \ref{lemma:bdry}, there is a direction from $y$ between $\g$ and $\gh^r$, which leads to a finite vertex. Consider such a vertex $X$. Without loss of generality we may assume, that $X$ is located behind $\langle A_n, B_n\rangle$ from $y$ (since finite vertices constitute a discrete set on $\H$, there is only a finite number of finite vertices in the triangle bounded by $\g$, $\gh^r$, and $\langle A_n, B_n\rangle$, while there is an infinite number of finite vertices in the angle between $\g$ and $\gh^r$).

Now rotate $\gh^r$ around $y$ counterclockwise, so that the distance between $X$ and the new geodesic becomes less than $\eps$ (cf. Lemma \ref{lemma:prx}), but $X$ is still on the left. Call the new geodesic $\g^r$. Clearly, it passes through $y$ and crosses all of $\langle A_i, B_i\rangle$. Since it is closer than $\eps$ to $X$, it has to cross an edge ending at $X$, and the edge cannot be any of $\langle A_i, B_i\rangle$, because $X$ is behind all of them from $y$.
\end{proof}

\section{Proof of Theorem \ref{thm:main}}

To prove that a set $\Lambda \subset \Sigma^\Z$ is a topological Markov chain, we only need to present a number $k \in \N$ and a set of allowed $(k+1)$-tuples $\Lambda_k$.

To prove otherwise, we need to show that given any $k \in \N$ one can find a $(k+l+1)$-tuple $\lambda$, $l > 0$, such that no infinite sequence, containing $\lambda$, is in $\Lambda$, but every subsequence of $\lambda$ of length $k+1$ is contained in some infinite sequence from $\Lambda$.

\begin{proof}
In our case the alphabet consists of the labels put on the edges of $\D$: $\Sigma = \{g_1, \dots, g_n\}$, where $n$ is the number of edges of $D$, and the set of infinite sequences $\Lambda$ is the set of Morse codes of generic geodesics on $\H$ with respect to $\D$.

First we prove that if $\D$ has a finite vertex, the set of Morse codes, generated by $\D$, is not a topological Markov chain.

We want to prove that given any $k \in \N$, there can be found a sequence $\lambda_{k+l} = (h_0, h_1, \dots, h_k, h_{k+1}, \dots, h_{k+l})$ of elements of $\Sigma$, for some $l \in \N$, such that there exist generic geodesics $\g^{(i)}$, for $0 \leq i \leq l$, so that Morse code of $\g^{(i)}$ contains $(h_i, h_{1+i}, \dots, h_{k+i})$, $0 \leq i \leq l$, but no geodesic has its Morse code containing $\lambda_{k+l}$ as a subsequence.

Fix some $k \in \N$. Consider a finite vertex $V$. Consequently applying Lemma \ref{lemma:n+1}, we can find a geodesic $\g_0$, passing through $V$ and crossing at least $k$ edges, with the last edge having a finite vertex on the right from $\g_0$. Without loss of generality we may assume that exactly the $k$-th edge that $\g_0$ crosses after passing $V$ has a finite vertex and the vertex is on the right from $\g_0$. Since there are only countably many finite vertices on the plane, we can choose $\g_0$ in such a way, that it doesn't pass through any finite vertex other than $V$.

Denote the endpoints of the first $k$ edges $\g_0$ crosses after passing through $V$, finite or infinite, by $A_1, B_1; \dots; A_k, B_k$ with $A_i$'s being on the left from $\g_0$.

Let $\{\g_\ph\}_{0 \leq \ph < 2\pi}$ be the family of geodesics, passing through $V$, such that the angle between $\g_0$ and $\g_\ph$ in the clockwise direction is $\ph$. It is clear that for all sufficiently small $\ph > 0$ the first $k$ edges $\g_\ph$ crosses after it passes through $V$, are the same as for $\g_0$.

\begin{figure}[hbt]
	\includegraphics[width=\textwidth]{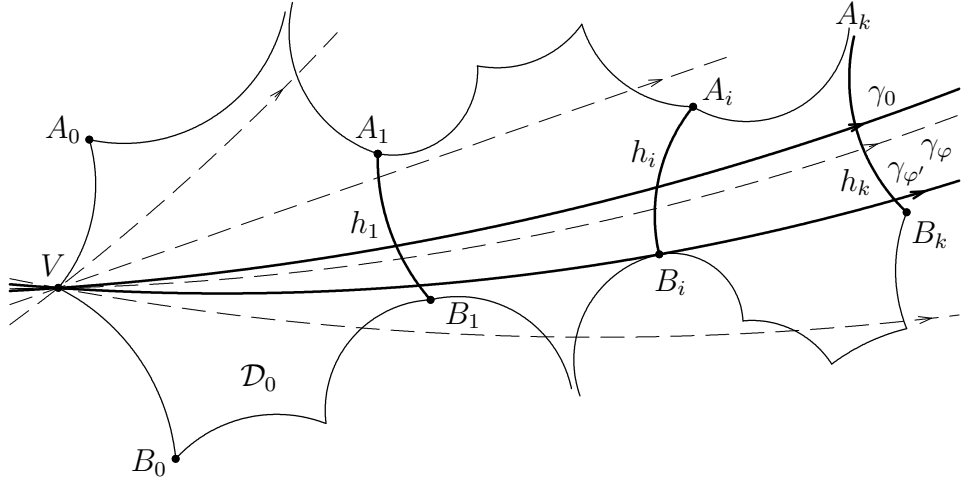}
	\setlength{\unitlength}{\textwidth}
	\begin{picture}(0,0)
		\put(-.46,.38){$A_0$}
		\put(-.465,.235){$V$}
		\put(-.37,.03){$B_0$}
		\put(-.255,.12){$\D_0$}

		\put(-.135,.38){$A_1$}
		\put(-.14,.28){$h_1$}
		\put(-.04,.185){$B_1$}

		\put(.23,.42){$A_i$}
		\put(.155,.355){$h_i$}
		\put(.18,.22){$B_i$}

		\put(.37,.5){$A_k$}
		\put(.375,.325){$h_k$}
		\put(.45,.27){$B_k$}

		\put(.405,.42){$\g_0$}
		\put(.46,.365){$\g_{\ph}$}
		\put(.425,.34){$\g_{\ph'}$}
	\end{picture}
	\caption{The family of geodesics passing through $V$.}
\end{figure}

Let $\ph' = \inf \{\ph > 0$, such that the first $k$ edges, crossed by $\g_\ph$ after it passes through $V$, are different from those of $\g_0\}$. Obviously, $\ph' > 0$. It is also clear that $\g_{\ph'}$ should be a geodesic, intersecting all of $\langle A_1, B_1\rangle$, $\langle A_2, B_2\rangle$, $\dots$, $\langle A_k, B_k]$ within their relative interior or at the right endpoint and passing through at least one of the right endpoints.

The above construction of $\g_{\ph'}$ can be thought of as a clockwise rotation of $\g_0$ around $V$ until it meets the first of $B_i$'s.

Now consider the fundamental domain, which $\g_0$ enters immediately after it passes through $V$, name it $\D_0$. Let the edges of $\D_0$, that end at $V$, have the other endpoints denoted $A_0$ and $B_0$, so that $A_0$ is on the left from $\g_0$.

It can be easily seen, that by another small rotation of $\g_0$ around one of its points other than $V$, one can find a generic geodesic $\g^b$, which intersects $[V,B_0\rangle$, $\langle A_1, B_1\rangle$, $\dots$, $\langle A_k, B_k]$. Let the label at $[V, B_0\rangle$ outside $\D_0$ be $h_0$, and the labels on $\langle A_i, B_i\rangle$ on the side facing $V$ be $h_i$, $1 \leq i \leq k$. Then the sequence $(h_0, h_1, \dots, h_k)$ is a part of Morse code of $\g^b$. Denote this sequence by $\lambda^{(0)}$.

On the other hand, rotating $\g_{\ph'}$ clockwise around a point $w$ behind $\langle A_k, B_k]$, it is possible to find another generic geodesic $\g^a$, which intersects $\langle A_0, V]$, $\langle A_1, B_1\rangle$, $\dots$, $\langle A_k, B_k]$.

\begin{figure}[htb]
	\includegraphics[width=\textwidth]{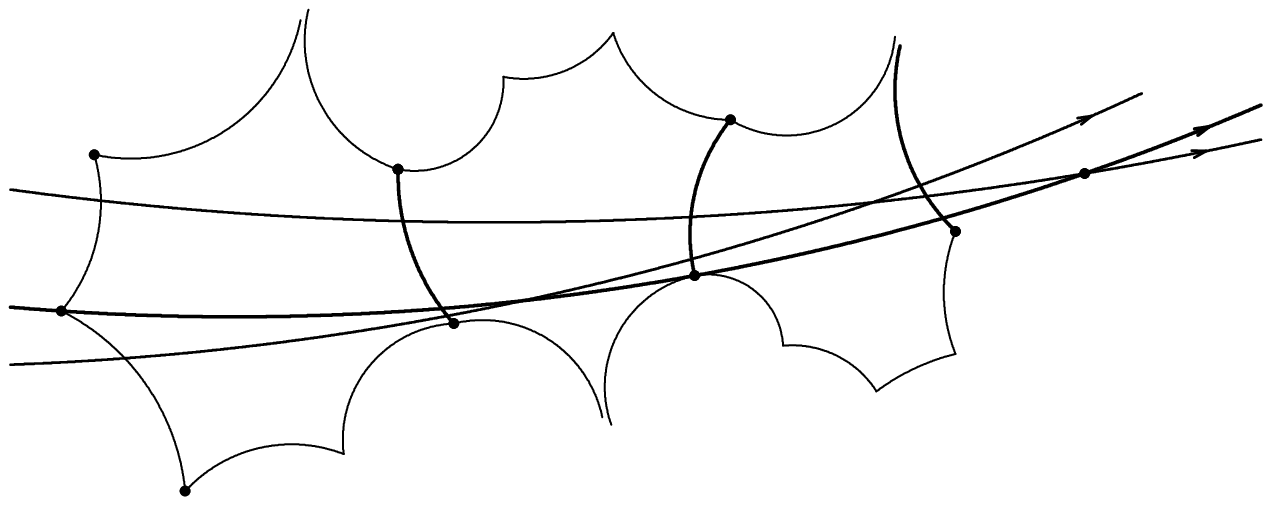}
	\setlength{\unitlength}{\textwidth}
	\begin{picture}(0,0)
		\put(-.455,.325){$A_0$}
		\put(-.49,.195){$V$}
		\put(-.4,.025){$B_0$}
		\put(-.33,.215){$\D_0$}

		\put(-.21,.315){$A_1$}
		\put(-.16,.14){$B_1$}

		\put(.06,.35){$A_i$}
		\put(.035,.18){$B_i$}

		\put(.215,.4){$A_k$}
		\put(.26,.23){$B_k$}

		\put(.35,.27){$w$}

		\put(.315,.34){$\g^b$}
		\put(.43,.35){$\g_{\ph'}$}
		\put(.45,.285){$\g^a$}
	\end{picture}
	\caption{Geodesics $\g_{\ph'}$, $\g^b$, and $\g^a$.}
\end{figure}

Due to Lemma \ref{lemma:bdry}, between $\g^a$ and $\g_{\ph'}$ after they cross there should be an infinite number of finite vertices. We could have chosen $\g^a$ so that one of those finite vertices is close enough to $\g^a$, so we can apply Lemma \ref{lemma:prx}. Let $Z$ be such a finite vertex between the two geodesics, that an edge, ending at $Z$, crosses $\g^a$.

Let the edges, crossed by $\g^a$ after it crosses $\langle A_k, B_k]$ and up to the moment it crosses the edge, ending at $Z$, be labeled by $h_{k+1},\dots,h_{k+l}$, and have endpoints $A_{k+1}$, $B_{k+1}$; $\dots$; $Z$, $B_{k+l}$. This automatically means that sequences $\lambda^{(i)} = (h_i, h_{1+i}$, $\dots$, $h_{k+i})$, $0 < i \leq l$, all are parts of Morse code of $\g^a$.

\begin{figure}[htb]
	\includegraphics[width=\textwidth]{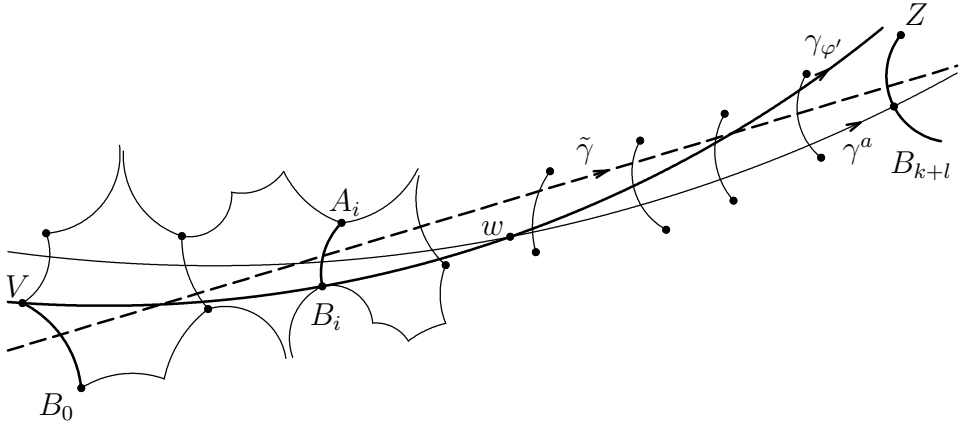}
	\setlength{\unitlength}{\textwidth}
	\begin{picture}(0,0)
		\put(-.5,.14){$V$}
		\put(-.465,.015){$B_0$}

		\put(-.16,.23){$A_i$}
		\put(-.18,.11){$B_i$}

		\put(.0,.205){$w$}
		\put(.1,.285){$\gt$}

		\put(.34,.4){$\g_{\ph'}$}
		\put(.38,.285){$\g^a$}

		\put(.445,.425){$Z$}
		\put(.43,.27){$B_{k+l}$}
	\end{picture}
	\caption{It is impossible for a geodesic to cross $[V, B_0\rangle$, $\langle A_1, B_1\rangle$, $\dots$, $\langle A_k, B_k]$, and $[Z,B_{k+l}\rangle$ all at the same time.}
\end{figure}

Let us show that the sequence $(h_0, h_1, \dots, h_k, h_{k+1}, \dots, h_{k+l})$ cannot be a part of Morse code of any geodesic. Assume this sequence is Morse code of some geodesic. Then there should one such geodesic $\gt$ that crosses $[V, B_0\rangle$, $\langle A_1, B_1\rangle$, $\dots$, $[Z, B_{k+l}\rangle$. But this is impossible, because $[V, B_0\rangle$ and $[Z, B_{k+l}\rangle$ lie on the same side from $\g_{\ph'}$, and $[Z, B_{k+l}\rangle$ does not intersect $\g_{\ph'}$, while at least one of $\langle A_i, B_i\rangle$, $1 \leq i \leq k$, lies on the other side of $\g_{\ph'}$.

So we proved, that $(h_0, h_1, \dots, h_{k+l})$ is not a part of a geodesic's Morse code, while $(h_0, \dots, h_k)$ is a part of Morse code of $\g^b$, and $(h_i, \dots, h_{k+i})$, $0 < i \leq l$, are parts of Morse code of $\g^a$. This ends the proof of necessity.
\bigskip

Now assume that $\D$ does not have finite vertices. Then every edge of $\D$ is a complete geodesic on $\H$.

Recall that set $\Sigma$ consists of the labels put on the edges of $\D$, which are some elements of $\G$. Since any edge is labeled by inverse elements from the opposite sides, and the same label does not appear on different edges inside a single domain, two edges, crossed by a geodesic subsequently, cannot be labeled by inverse elements. Consider set $\Lambda = \{\lambda \in \Sigma^\Z: \lambda_{n+1} \neq \lambda_n^{-1},\ \forall n \in \Z\}$. If we introduce set $\Lambda_1 = \{(g, h) \in \Sigma^2: g \neq h^{-1}\}$, set $\Lambda$ can be written as $\{\lambda \in \Sigma^\Z: (\lambda_n, \lambda_{n+1}) \in \Lambda_1, \forall n \in \Z\}$. It is clear, that Morse code of any geodesic must be a subset of $\Lambda$. In order to prove that Morse coding of geodesics with respect to $\D$ is a topological Markov chain, we will show, that every sequence from $\Lambda$ can be realized as Morse code of some geodesic.

Consider a sequence $\lambda' \in \Lambda$. Note that we can always find a sequence of copies of $\D$, $(\dots, \D_{-1}, \D_0, \D_1, \dots)$, such that $\D_n$ and $\D_{n+1}$ share an edge, which is labeled by $\lambda'_n$ inside $\D_n$. To end the proof, we only need to find a geodesic, which crosses all the edges in the same order. If such a geodesic exists, it cannot cross any other edges in between of the ones just mentioned, since all domains are convex sets, and any two consecutive edges in the sequence belong to a same domain.

Let the edge, shared by $\D_n$ and $D_{n+1}$, have endpoints $\xi_n, \xi_n' \in \Hfty$. It is clear, that $\xi_{n+l}$ and $\xi_{n+l}'$ lie on the same side from geodesic $(\xi_n, \xi_n')$, for any $n \in \Z$ and $l \neq 0 \in \Z$. Consider intervals $[\xi_n, \xi_n']$ on $\Hfty$, for all $n \neq 0$, chosen in such a way, that they do not contain $\xi_0$ and $\xi_0'$. Then $[\xi_n, \xi_n']$, $n \neq 0$, form two nested sequences as $n \to \infty$ and as $n \to -\infty$, one on each side from geodesic $(\xi_0, \xi_0')$. Thus, there should be points $\xi_\infty, \xi_{-\infty} \in \Hfty$, such that $\xi_\infty$ is on the other from $(\xi_0, \xi_0')$ side of $(\xi_n, \xi_n')$, for all $n > 0$, and $\xi_{-\infty}$ is on the other from $(\xi_0, \xi_0')$ side of $(\xi_n, \xi_n')$, for all $n < 0$. Obviously, $\xi_\infty$ and $\xi_{-\infty}$ are two different points. Consider the geodesic $(\xi_{-\infty}, \xi_\infty)$. It has to cross all of $(\xi_n, \xi_n')$ and they can only be crossed in the order of increasing $n$.
\end{proof}

The proof of the sufficiency implies the following corollary.

\begin{cor}
\label{cor:1step}
If Morse coding, given by a Dirichlet domain of a Fuchsian group, produces a topological Markov chain (topological Markov chain), it produces a 1-step topological Markov chain.
\end{cor}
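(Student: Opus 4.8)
The plan is to deduce the statement directly from Theorem~\ref{thm:main} together with the explicit description of $\Lambda$ that was produced while proving sufficiency, rather than trying to compress an abstract $k$-step presentation into a $1$-step one (which would be false for arbitrary alphabets and languages). First I would invoke the contrapositive of the necessity half of Theorem~\ref{thm:main}: if the Morse coding associated to a Dirichlet domain $\D$ produces a topological Markov chain at all, then $\D$ cannot have a finite vertex, so every edge of $\D$ is a complete geodesic on $\H$.

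Next I would recall exactly what the sufficiency argument delivered in that case. It did not merely assert the existence of \emph{some} step $k$; it identified the set of Morse codes explicitly as $\Lambda = \{\lambda \in \Sigma^\Z : (\lambda_n, \lambda_{n+1}) \in \Lambda_1 \text{ for all } n \in \Z\}$, where $\Lambda_1 = \{(g,h) \in \Sigma^2 : g \neq h^{-1}\}$. By the very definition of a $k$-step topological Markov chain with $k = 1$, this is a presentation of the set of Morse codes as a $1$-step topological Markov chain.

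Combining the two observations closes the argument: under the hypothesis, $\D$ has no finite vertices, hence the set of Morse codes equals the $\Lambda$ above, which is a $1$-step topological Markov chain. I expect no genuine obstacle here; the only point requiring care is that the proof must use \emph{both} implications of Theorem~\ref{thm:main} — necessity to rule out finite vertices, and the constructive part of sufficiency to read off $\Lambda_1$ — so this corollary should be stated and proved immediately after the theorem, as in the excerpt.
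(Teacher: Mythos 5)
Your argument is exactly the one the paper intends: the remark preceding the corollary says the proof of sufficiency implies it, and your combination of the necessity direction (to exclude finite vertices) with the explicit $1$-step presentation $\Lambda_1 = \{(g,h) \in \Sigma^2 : g \neq h^{-1}\}$ from the sufficiency proof is precisely that reasoning, just spelled out more carefully than the paper does. No gaps.
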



\begin{thebibliography}{10}
\bibitem{Artin} E. Artin. {\it Ein mechanisches System mit quasiergodischen Bahnen.} Hamb. Math. Abh., {\bf 3} (1924), 170--175.

\bibitem{BowenSeries} R. Bowen, C. Series. {\it Markov maps associated with Fuchsian groups.} Inst. Hautes \'Etudes Sci. Publ. Math. No. 50 (1979), 153--170.

\bibitem{Katok1} S. Katok. {\it Fuchsian groups.} The University of Chicago Press, 1992.

\bibitem{Katok2} S. Katok. {\it Coding of closed geodesics after Gauss and Morse.} Geom. Dedicata {\bf 63} (1996), 123--145.

\bibitem{KatokUgarcovici} S. Katok, I. Ugarcovici. {\it Geometrically Markov geodesics on the modular surface.} Moscow Math. J. {\bf 5} (2005), 135--151.

\bibitem{Koebe} P. K\"obe. {\it Riemannische Manigfaltigkeiten und nichteuklidische Raumformen, IV.} Sitzungberichte der Preu\ss ischen Akad. der Wissenschaften (1929), 414--457.

\bibitem{MayerStromberg} D. Mayer, F. Str\"omberg. {\it Symbolic dynamics for the geodesic flow on Hecke surfaces.} Journal of Modern Dynamics vol. 2 (2008), no. 4, 581--627.

\bibitem{Morse} M. Morse. {\it A one-to-one representation of geodesics on a surface of negative curvature.} Amer. J. Math. {\bf 43} (1921), 33--51.

\bibitem{Series1} C. Series. {\it Symbolic dynamics for geodesic flows.} Acta Math. {\bf 146} (1981), 103--128.

\bibitem{Series2} C. Series. {\it Geometrical Markov coding of geodesics on surfaces of constant negative curvature.} Ergod. Th. \& Dynam. Sys. {\bf 6} (1986), 601--625.
\end{thebibliography}
\end{document}